\providecommand{\tabularnewline}{\\}
\DeclareRobustCommand{\lyxsout}[1]{\ifx\\#1\else\sout{#1}\fi}
\theoremstyle{plain}
\newtheorem{thm}{\protect\theoremname}[section]
\let\eqref\relax
\providecommand{\theoremname}{Theorem}
\begin{document}

\begin{frontmatter}{}

\title{On the existence of non-trivial steady-state size-distributions for
a class of flocculation equations}

\author{Inom Mirzaev}

\ead{mirzaev.1@mbi.osu.edu}

\address{Mathematical Biosciences Institute, Ohio State University, Columbus,
OH, United States}

\author{David M. Bortz}

\ead{dmbortz@colorado.edu}

\address{Department of Applied Mathematics, University of Colorado, Boulder,
CO, United States}
\begin{abstract}
Flocculation is the process whereby particles (i.e., \emph{flocs})
in suspension reversibly combine and separate. The process is widespread
in soft matter and aerosol physics as well as environmental science
and engineering. We consider a general size-structured flocculation
model, which describes the evolution of floc size distribution in
an aqueous environment. Our work provides a unified treatment for
many size-structured models in the environmental, industrial, medical,
and marine engineering literature. In particular, the mathematical
model considered in this work accounts for basic biological phenomena
in a population of microorganisms including growth, death, sedimentation,
predation, surface erosion, renewal, fragmentation and aggregation.
The central objective of this work is to prove existence of positive
steady states of this generalized flocculation model. Using results
from fixed point theory we derive conditions for the existence of
continuous, non-trivial stationary solutions. We further develop a
numerical scheme based on spectral collocation method to approximate
these positive stationary solutions. We explore the stationary solutions
of the model for various biologically relevant parameters and give
valuable insights for the efficient removal of suspended particles.
\end{abstract}
\begin{keyword}
Flocculation model, nonlinear evolution equations, structured populations
dynamics, spectral collocation method 
\end{keyword}

\end{frontmatter}{}

\section{Introduction}

\emph{Flocculation} is the process whereby particles (i.e., flocs)
in suspension reversibly combine and separate. The process is widespread
in soft matter and aerosol physics as well as environmental science
and engineering. For instance, in fields such as water treatment,
biofuel production and beer fermentation, flocculation process is
often used to enhance suspended solids removal. One of the most important
design and control parameters for the efficient removal of suspended
particles is the size distribution of the flocs in stirring tanks.
A popular mathematical model describing the time-evolution of the
particle size distribution in a stirring tank is a 1D nonlinear partial
integro-differential equation based on the population-balance equations
proposed by \citet{Smoluchowski1917}. The model have been successful
in matching many flocculation experiments \citep{li2004modelling,ducoste2002atwoscale,spicer1996coagulation}. 

Previous analytical work on these models focused on classes of flocculation
equations that did not allow for the \emph{vital dynamics} (i.e.,
birth and death) of individual particles. These phenomena are obviously
critical features in the modeling of microbial flocculation. Accordingly,
in this work we consider a general size-structured flocculation model
which accounts for growth, aggregation, fragmentation, surface erosion
and sedimentation. The equations for the microbial flocculation model
track the time-evolution of the particle size number density $u(t,\,x)$
and can be written as 
\begin{eqnarray}
u_{t} & = & \mathcal{F}(u)\label{eq: agg and growth model}
\end{eqnarray}
where
\[
\mathcal{F}(u):=\mathcal{G}(u)+\mathcal{A}(u)+\mathcal{B}(u),
\]
$\mathcal{G}$ denotes growth
\begin{equation}
\mathcal{G}(u):=-\partial_{x}(gu)-\mu(x)u(t,\,x)\,,\label{eq:Growth}
\end{equation}
$\mathcal{A}$ denotes aggregation
\begin{alignat}{1}
\mathcal{A}(u) & :=\frac{1}{2}\int_{0}^{x}k_{a}(x-y,\,y)u(t,\,x-y)u(t,\,y)\,dy\nonumber \\
 & \quad-u(t,\,x)\int_{0}^{\overline{x}-x}k_{a}(x,\,y)u(t,\,y)\,dy\,,\label{eq:Aggregation}
\end{alignat}
and $\mathcal{B}$ denotes breakage 
\begin{equation}
\mathcal{B}(u):=\int_{x}^{\overline{x}}\Gamma(x;\,y)k_{f}(y)u(t,\,y)\,dy-\frac{1}{2}k_{f}(x)u(t,\,x)\,.\label{eq:Breakage}
\end{equation}
The boundary condition is traditionally defined at the smallest size
$0$ and the initial condition is defined at $t=0$

\[
g(0)u(t,\,0)=\int_{0}^{\overline{x}}q(x)u(t,\,x)dx,\quad u(0,\,x)=u_{0}(x)\,,
\]
where the renewal rate $q(x)$ represents the number of new flocs
entering the population. We note that this boundary condition could
also be used to model the surface erosion of flocs, where single cells
are eroded off the floc and enter single cell population. A floc size
is usually expressed as volume. Moreover, since the equations model
flocculation of particles in a confined space, the flocs are assumed
to have a maximum size $\overline{x}<\infty$. The function $g(x)$
represents the average growth rate of the flocs of size $x$ due to
proliferation, and the coefficient $\mu(x)$ represents a size-dependent
removal rate due to gravitational sedimentation and death. 

The aggregation of flocs into larger ones is modeled in (\ref{eq:Aggregation}),
by the Smoluchowski coagulation equation. The function $k_{a}(x,\,y)$
is the aggregation kernel, which describes the rate with which the
flocs of size $x$ and $y$ agglomerate to form a floc of size $x+y$.
This equation has been widely used, e.g., to model the formation of
clouds and smog in meteorology \citep{pruppacher2012microphysics}, the kinetics
of polymerization in biochemistry \citep{ziff1980kinetics}, the clustering
of planets, stars and galaxies in astrophysics \citep{makino1998onthe},
and even schooling of fish in marine sciences \citep{niwa1998schoolsize}. The
equation has also been the focus of considerable mathematical analysis.
For the aggregation kernels satisfying the inequality $k_{a}(x,\,y)\le1+x+y$,
existence of mass conserving global in time solutions were proven
\citep{Dubovski1996,Fournier2005,Menon2005} (for some suitable initial
data). Conversely, for aggregation kernels satisfying $(xy)^{\gamma/2}\le k_{a}(x,\,y)$
with $1<\gamma\le2$, it has been shown that the total mass of the
system blows up in a finite time (referred as a \emph{gelation time})
\citep{Escobedo2002a}. For a review of further mathematical results,
we refer readers to review articles by Aldous \citep{Aldous1999},
Menon and Pego \citep{Menon2006}, and Wattis \citep{Wattis2006a}
and the book by Dubovskii \citep{Dubovskii1994}. Lastly, although
the Smoluchowski equation has received substantial theoretical work,
the derivation of analytical solutions for many realistic aggregation
kernels has proven elusive. Towards this end, many discretization
schemes for numerical simulations of the Smoluchowski equations have
been proposed, and we refer interested readers to the review by Bortz
\citep[\S 6]{bortz2015chapter}.

The breakage of flocs due to fragmentation is modeled by the terms
in (\ref{eq:Breakage}), where the fragmentation kernel $k_{f}(x)$
calculates the rate with which a floc of size $x$ fragments. The
breakage process assumes the fragmentation of a floc of size $x$
into sizes $\{x-y,\,y\}$ and $\{y,\,x-y\}$ as two separate events.
Therefore, the factor of $1/2$ is included in the second sum to avoid
double-counting. The integrable function $\Gamma(x;y)$ represents
the post-fragmentation probability density of daughter flocs for the
fragmentation of the parent flocs of size $y$. The post-fragmentation
probability density function $\Gamma$ is one of the least understood
terms in the flocculation model. Many different forms are used in
the literature, among which normal and log-normal densities are the
most common \citep{Spicer1996a}. Recent modeling and computational
work suggests that normal and log-normal forms for $\Gamma$ are not
correct and that a form closer to an $\arcsin(x;y)$ density would
be more accurate \citep{mirzaev2016aninverse,byrne2011postfragmentation}.
However, in this work we do not restrict ourselves to any particular
form of $\Gamma$, and instead simply assume that the function $\Gamma$
satisfies the mass conservation requirement. In other words, all the
fractions of daughter flocs formed upon the fragmentation of a parent
floc sum to unity,
\begin{equation}
\int_{0}^{y}\Gamma(x;\,y)\,dx=1\text{ for all }y\in(0,\,\overline{x}].\label{eq:number conservation for gamma}
\end{equation}

The microbial flocculation equation, presented in (\ref{eq: agg and growth model}),
is a generalization of many mathematical models appearing in the size-structured
population modeling literature and has been widely used, e.g., to
model the formation of clouds and smog in meteorology \citep{pruppacher2012microphysics},
the kinetics of polymerization in biochemistry \citep{ziff1980kinetics},
the clustering of planets, stars and galaxies in astrophysics \citep{makino1998onthe},
and even schooling of fish in marine sciences \citep{niwa1998schoolsize}.
For example, when the fragmentation kernel is omitted, $k_{f}\equiv0$,
the flocculation model reduces to algal aggregation model used to
describe the evolution of a phytoplankton community \citep{ackleh1997modeling}.
When the removal and renewal rates are set to zero, the flocculation
model simplifies to a model used to describe the proliferation of
\emph{Klebsiella pneumonia }in a bloodstream \citep{bortz2008klebsiella}.
Furthermore, the flocculation model, with only growth and fragmentation
terms, was used to investigate the elongation of prion polymers in
infected cells \citep{calvez2012selfsimilarity,doumic-jauffret2009eigenelements,calvez2010priondynamics}. 

Investigating asymptotic behavior of the equation (\ref{eq: agg and growth model})
has been a challenging task because of the nonlinearity introduced
by the aggregation terms. Nevertheless, under suitable conditions
on the kernels, the existence of a positive steady state has been
established for the pure aggregation and fragmentation case \citep{laurencot2005steadystates}.
For the case $\overline{x}=\infty$, \citep{Africa2011} establishes
that for certain range of parameters, the solutions of the flocculation
model do blow up in finite time. To the best of our knowledge, for
the case $\overline{x}<\infty$ the long-term behavior of this model
has not been considered. Hence, our main goal in this paper is to
rigorously investigate the long-term behavior of the broad class of
flocculation models described in (\ref{eq: agg and growth model}).

When the long-term behavior of biological populations is considered,
many populations converge to a stable time-independent state. Thus,
identifying conditions under which a population converges to a stationary
state is one of the most important applications of mathematical population
modeling. Hence, our main goal in this work is to prove existence
of positive steady states of the microbial flocculation model (\ref{eq: agg and growth model}).
It is trivially true that a zero stationary solution of the microbial
flocculation model exists, but we are also interested in non-trivial
stationary solutions of the microbial flocculation model. Consequently,
in Section \ref{sec:Existence-of-a postive} we first show that under
some suitable conditions on the model parameters the equation (\ref{eq: agg and growth model})
has at least one non-trivial (non-zero and non-negative) stationary
solution. In Section \ref{sec:Numerical-results}, we present a numerical
scheme to approximate these stationary solutions. The numerical scheme
is based on spectral collocation method, and thus yields very accurate
results even for small approximation dimensions. Furthermore, we explore
the stationary solutions of the model for various biologically relevant
parameters in Section \ref{sec:Numerical Exploration of Steady States}
and give valuable insights for the efficient removal of suspended
particles.

\section{\label{sec:Existence-of-a postive}Existence of a positive stationary
solution}

The flocculation model under our consideration (\ref{eq: agg and growth model}),
accounts for physical mechanisms such as growth, removal, fragmentation,
aggregation and renewal of microbial flocs. Thus, under some conditions,
which balance these mechanisms, one could reasonably expect that the
model possesses a non-trivial stationary solution. Hence, our main
goal in this section is to derive sufficient conditions for the model
terms such that the equation (\ref{eq: agg and growth model}) engenders
a positive stationary solution.

The flocculation model in this form (\ref{eq: agg and growth model})
was first considered by Banasiak and Lamb in \citep{banasiak2009coagulation},
where they employed the flocculation model to describe the dynamical
behavior of phytoplankton cells. The authors showed that under some
conditions the flocculation model is well-posed, i.e., there exist
a unique, global in time, positive solution for every absolutely integrable
initial distribution. For the remainder of this work, we make the
following assumptions on the model rates for which well-posedness
of the solutions of the microbial flocculation equations has been
established by \citet{banasiak2009coagulation}:
\begin{alignat*}{1}
(\mathbf{A}1)\qquad & g\in C^{1}(I)\qquad g(x)>0\:\text{ for }x\in I=[0,\,\overline{x}]\\
(\mathbf{A}2)\qquad & k_{a}\in L^{\infty}(I\times I),\quad k_{a}(x,\,y)=k_{a}(y,\,x)\\
 & \text{ and }k_{a}(x,\,y)=0\,\,\,\text{ if }x+y\ge\overline{x}\,,\\
(\mathbf{A}3)\qquad & \mu\in C(I)\qquad\text{and }\mu\ge0\text{ a.e. on }I\,,\\
(\mathbf{A}4)\qquad & q\in L^{\infty}(I)\qquad\text{and }q\ge0\text{ a.e. on }I\,,\\
(\mathbf{A}5)\qquad & k_{f}\in C(I)\qquad k_{f}(0)=0\text{ and }k_{f}\ge0\text{ a.e. on }I\,,\\
(\mathbf{A}6)\qquad & \Gamma(\cdot,\,y)\in L^{\infty}(I),\quad\Gamma(x;\,y)\ge0\text{ for }x\in(0,\,y];\\
 & \text{and }\Gamma(x;\,y)=0\text{ for }x\in(y,\,\overline{x})\,.
\end{alignat*}

Assumption ($\mathbf{A}1$) states that the floc of any size has strictly
positive growth rate. This in turn implies that flocs can grow beyond
the maximal size $\overline{x}$, i.e., the model ignores what happens
beyond the maximal size $\overline{x}$ (as many authors in the literature
have done \citep{Ackleh1997,farkas2007stability,ackleh1997modeling}).
We also note that although the Assumption $(\mathbf{A}1)$ is widely
used in the literature it does generate biologically unrealistic condition
$g(0)>0$, i.e., the flocs of size zero also have positive growth
rate. However, this assumption is crucial for our work, and thus we
postpone the analysis of the case $g(0)=0$ for our future research.
Assumption $(\mathbf{A}2)$ states that for the aggregates of size
$x$ and $y$ the aggregation rate is zero if the combined size of
the aggregates is larger than the maximal size. Lastly, Assumption
$(\mathbf{A}3)$ on $\mu(x)$ enforces continuous dependence of the
removal on the size of a floc and ensures that every floc is removed
with a non-negative rate.

Recall that at a steady state we should have
\begin{equation}
u_{t}=0=\mathcal{F}[u]\,.\label{eq:steady state equation-1}
\end{equation}
By Assumption $(\mathbf{A}1)$, we know that $1/g\in C(I)$ and thus
we can define $u=f/g$ for some $f\in C(I)$. The substitution of
this $f$ into (\ref{eq:steady state equation-1}), integration between
$0$ and an arbitrary $x$, and rearrangment of the terms yields
\begin{alignat}{1}
f(x) & =f(0)-\int_{0}^{x}\frac{k_{f}(y)/2+\mu(y)}{g(y)}f(y)\,dy+\int_{0}^{x}\int_{z}^{\overline{x}}\frac{\Gamma(z;\,y)k_{f}(y)}{g(y)}f(y)\,dy\,dz\nonumber \\
 & \quad+\frac{1}{2}\int_{0}^{x}\int_{0}^{z}\frac{k_{a}(z-y,\,y)}{g(z-y)g(y)}f(z-y)f(y)\,dy\,dz-\int_{0}^{x}\frac{f(z)}{g(z)}\int_{0}^{\overline{x}}\frac{k_{a}(z,\,y)}{g(y)}f(y)\,dy\,dz\,.\label{eq: solving for f}
\end{alignat}
At this point we set 
\begin{equation}
f(0)=1=g(0)p(0)=\int_{0}^{\overline{x}}q(y)u(y)\,dy\,.\label{eq: condition on f}
\end{equation}
Note that if there is a function $f$ satisfying the equations (\ref{eq: solving for f})
and (\ref{eq: condition on f}), then $u_{*}$ is a steady state for
the modified choice of $q(x)$ 
\begin{equation}
q(x):=\frac{q(x)}{\int_{0}^{\overline{x}}\frac{q(y)f(y)}{g(y)}\,dy}\,.\label{eq: assumption on renewal}
\end{equation}
We now define the operator $\Phi$ as
\begin{alignat}{1}
\Phi[f](x) & :=1-\int_{0}^{x}\frac{k_{f}(y)/2+\mu(y)}{g(y)}f(y)\,dy+\int_{0}^{x}\int_{z}^{\overline{x}}\frac{\Gamma(z;\,y)k_{f}(y)}{g(y)}f(y)\,dy\,dz\nonumber \\
 & \quad+\frac{1}{2}\int_{0}^{x}\int_{0}^{z}\frac{k_{a}(z-y,\,y)}{g(z-y)g(y)}f(z-y)f(y)\,dy\,dz-\int_{0}^{x}\frac{f(z)}{g(z)}\int_{0}^{\overline{x}}\frac{k_{a}(z,\,y)}{g(y)}f(y)\,dy\,dz\,.\label{eq:definition of phi}
\end{alignat}
and will use a fixed point theorem to prove the existence of a fixed
point $f$ of $\Phi$ . This in turn will allow us to claim that equation
(\ref{eq:steady state equation-1}) has at least one non-trivial positive
solution.

The use of fixed point theorems for showing existence of non-trivial
stationary solutions is not new in size-structured population modeling.
For example, fixed point theorems, based on Leray-Schauder degree
theory, have been used to find stationary solutions of linear Sinko-Streifer
type equations \citep{Pruss1983,farkas2012steadystates}. Moreover,
the Schauder fixed point theorem has been used to establish the existence
of steady state solutions of nonlinear coagulation-fragmentation equations
\citep{Laurencot2005}. For our purposes we will use the Contraction
Mapping Theorem.

We carry out the analysis of this work on the space of continuous
functions $\mathcal{X}=C(I)$ with usual uniform (supremum) norm $\left\Vert \cdot\right\Vert _{u}$.
We also denote the usual essential supremum of a function by $\left\Vert \cdot\right\Vert _{\infty}$.
Since the positive cone in $C(I)$, denoted by $\left(C(I)\right)_{+}$,
is closed and convex, we choose $K$ to be $\left(C(I)\right)_{+}$.
Then $K_{r}=\overline{K\cap B_{r}(0)}$, where $B_{r}(0)\subset\mathcal{X}$
is an open ball of radius $r$ and centered at zero, and $r$ has
yet to be chosen. Note that since $K_{r}$ is also Banach space since
it is closed subspace of $\mathcal{X}$. Next, we show that one can
choose model rates such that the operator $\Phi$ defined in (\ref{eq:definition of phi})
maps $K_{r}$ to $K_{r}$ and is also contraction. This in turn implies
existence of a positive stationary solution of the operator $\mathcal{F}$.
We are now in a position to state the main result of this section
in the following theorem.
\begin{thm}
\label{thm:positive steady state existence}Assume that the condition
\[
0\le\frac{1}{2}k_{f}(x)-\mu(x),\tag{\textbf{C}1}
\]
holds true for all $x\in I$. For sufficiently small choice of $\left\Vert \frac{1}{g}\right\Vert _{1}$
the operator $\Phi$ defined in (\ref{eq:definition of phi}) has
a unique non-zero fixed point, $f_{*}\in K$ satisfying 
\begin{equation}
1\le\left\Vert f_{*}\right\Vert _{u}\le r\label{eq:bounds for the fixed point}
\end{equation}
for some $r\ge1$. Moreover, for the modified choice of the renewal
rate in (\ref{eq: assumption on renewal}), the non-zero and non-negative
function 
\begin{equation}
u_{*}=\frac{f_{*}}{g}\in C(I)\label{eq:stationary solution}
\end{equation}
is a unique stationary solution of the flocculation model defined
in (\ref{eq: agg and growth model}) on $K$.
\end{thm}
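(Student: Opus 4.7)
The plan is to apply the Banach contraction mapping theorem to $\Phi$ viewed as a self-map of the closed convex set $K_{r}\subset\mathcal{X}=C(I)$ for a suitably chosen $r\ge 1$, and then read off $u_{*}=f_{*}/g$ from the fixed point $f_{*}$. Three ingredients need to be verified: (a)~$\Phi(K_{r})\subseteq K_{r}$, comprising both non-negativity of $\Phi[f]$ and the norm bound $\|\Phi[f]\|_{u}\le r$; (b)~$\Phi$ is a strict contraction on $K_{r}$; (c)~the resulting fixed point is non-zero, satisfies the annulus bound \eqref{eq:bounds for the fixed point}, and yields a genuine stationary solution after the renormalisation \eqref{eq: assumption on renewal} of $q$.

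Continuity of $\Phi[f]$ in $x$ is immediate from the integral structure of \eqref{eq:definition of phi} together with $(\mathbf{A}1)$--$(\mathbf{A}6)$. The substantive step is non-negativity of $\Phi[f]$ for $f\in K_{r}$. Swapping the order of integration in the $\Gamma$-term through $\{0\le z\le x,\ z\le y\le\overline{x}\}=\{0\le y\le\overline{x},\ 0\le z\le\min(x,y)\}$ and invoking the mass-conservation identity \eqref{eq:number conservation for gamma} gives
\[
\int_{0}^{x}\!\int_{z}^{\overline{x}}\frac{\Gamma(z;y)k_{f}(y)}{g(y)}f(y)\,dy\,dz \;\ge\; \int_{0}^{x}\frac{k_{f}(y)f(y)}{g(y)}\,dy,
\]
so the two linear contributions in \eqref{eq:definition of phi} combine into $\int_{0}^{x}(k_{f}(y)/2-\mu(y))f(y)/g(y)\,dy\ge 0$ by condition $(\mathbf{C}1)$. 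The two quadratic aggregation terms cannot be signed individually, but the negative one is controlled by $\|k_{a}\|_{\infty}\|1/g\|_{1}^{2}\|f\|_{u}^{2}\le r^{2}\|k_{a}\|_{\infty}\|1/g\|_{1}^{2}$, so the additive constant $1$ in \eqref{eq:definition of phi} dominates and keeps $\Phi[f]\ge 0$ whenever $r^{2}\|k_{a}\|_{\infty}\|1/g\|_{1}^{2}\le 1$. This is the first place where smallness of $\|1/g\|_{1}$ is invoked.

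The remaining estimates are routine. Using the same swap for the $\Gamma$-term and a convolution bound for the first aggregation term yields
\[
\|\Phi[f]\|_{u}\;\le\;1+C_{1}\|1/g\|_{1}\|f\|_{u}+C_{2}\|1/g\|_{1}^{2}\|f\|_{u}^{2},
\]
with constants $C_{1},C_{2}$ depending only on $\|k_{f}\|_{\infty}$, $\|\mu\|_{\infty}$, $\|\Gamma\|_{\infty}$ and $\|k_{a}\|_{\infty}$; fixing any $r\ge 1$ and then shrinking $\|1/g\|_{1}$ makes the right-hand side $\le r$, securing the self-map property. Linearising the quadratic differences via $f_{1}(z-y)f_{1}(y)-f_{2}(z-y)f_{2}(y)=f_{1}(z-y)(f_{1}-f_{2})(y)+(f_{1}-f_{2})(z-y)f_{2}(y)$, an analogous estimate produces
\[
\|\Phi[f_{1}]-\Phi[f_{2}]\|_{u}\;\le\;\bigl(C_{3}\|1/g\|_{1}+C_{4}r\|1/g\|_{1}^{2}\bigr)\|f_{1}-f_{2}\|_{u},
\]
and further shrinking $\|1/g\|_{1}$ drives the bracketed coefficient strictly below~$1$.

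Banach's theorem then delivers a unique fixed point $f_{*}\in K_{r}$; evaluating $\Phi[f_{*}](0)=1$ immediately forces $\|f_{*}\|_{u}\ge 1$, so \eqref{eq:bounds for the fixed point} holds. Setting $u_{*}=f_{*}/g\in C(I)$ and differentiating $f_{*}=\Phi[f_{*}]$ in $x$ recovers the pointwise steady-state identity $\mathcal{F}(u_{*})\equiv 0$, while the normalisation $f_{*}(0)=1$ combined with the rescaling \eqref{eq: assumption on renewal} of $q$ reproduces the renewal boundary condition $g(0)u_{*}(0)=\int_{0}^{\overline{x}}q(y)u_{*}(y)\,dy$. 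I expect the non-negativity step to be the main obstacle, since it is the only point in the argument where condition $(\mathbf{C}1)$, the $\Gamma$-normalisation, and smallness of $\|1/g\|_{1}$ must be coordinated simultaneously.
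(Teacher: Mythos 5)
Your proposal is correct and follows essentially the same route as the paper's own proof: the same contraction argument on $K_{r}$, with non-negativity of $\Phi[f]$ obtained by swapping the order of integration in the $\Gamma$-term, invoking the normalisation (\ref{eq:number conservation for gamma}) together with condition $(\mathbf{C}1)$, and absorbing the negative aggregation term into the additive constant $1$; the paper merely makes the choice $r=\left\Vert \frac{1}{g}\right\Vert _{1}^{-1}\left\Vert k_{a}\right\Vert _{\infty}^{-1/2}$ explicit where you fix $r$ first and then shrink $\left\Vert \frac{1}{g}\right\Vert _{1}$, which is equivalent. The only nitpick is that your self-map bound requires $r>1$ strictly rather than ``any $r\ge1$'' (at $r=1$ the inequality $1+C_{1}\left\Vert \frac{1}{g}\right\Vert _{1}r+C_{2}\left\Vert \frac{1}{g}\right\Vert _{1}^{2}r^{2}\le r$ cannot hold), precisely as the paper observes when it takes $\left\Vert \frac{1}{g}\right\Vert _{1}$ small enough to ensure $r>1$.
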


\begin{proof}
For $f\in K_{r}$ we have 
\begin{alignat*}{1}
\Phi[f] & \ge1-\int_{0}^{x}\frac{k_{f}(y)/2+\mu(y)}{g(y)}f(y)\,dy\\
 & \quad+\int_{0}^{x}\int_{z}^{x}\frac{\Gamma(z;\,y)k_{f}(y)}{g(y)}f(y)\,dy\,dz-\int_{0}^{x}\frac{f(z)}{g(z)}\int_{0}^{\overline{x}}\frac{k_{a}(z,\,y)}{g(y)}f(y)\,dy\,dz\\
 & \ge1-\int_{0}^{x}\frac{f(z)}{g(z)}(k_{f}(z)/2+\mu(z))\,dz+\int_{0}^{x}\frac{k_{f}(y)f(y)}{g(y)}\underbrace{\int_{0}^{y}\Gamma(z;\,y)\,dz}_{=1}\,dy-\left\Vert f\right\Vert _{u}^{2}\left\Vert k_{a}\right\Vert _{\infty}\left\Vert \frac{1}{g}\right\Vert _{1}^{2}\\
 & \ge\int_{0}^{x}\frac{f(z)}{g(z)}\left(\frac{1}{2}k_{f}(z)-\mu(z)\right)\,dz\,+1-r^{2}\cdot\left\Vert k_{a}\right\Vert _{\infty}\left\Vert \frac{1}{g}\right\Vert _{1}^{2},
\end{alignat*}
where $\left\Vert \cdot\right\Vert _{1}$ represents the usual $L^{1}$
norm on $I$. The first condition of the theorem $(\mathbf{C}1)$
guarantees that 
\[
\frac{1}{2}k_{f}(z)-\mu(z)>0\text{ for all }z\in I\,,
\]
so we can choose 
\begin{equation}
r=\left\Vert \frac{1}{g}\right\Vert _{1}^{-1}\left\Vert k_{a}\right\Vert _{\infty}^{-1/2}\label{eq:choice for r}
\end{equation}
 in $K_{r}$ such that $\Phi[f]\ge0$, i.e., $\Phi\,:\,K_{r}\to K$.
On the other hand, using the assumptions $(\mathbf{A}1)$-$(\mathbf{A}6)$,
it is straightforward to show that $\Phi(K_{r})\subset C(I)$. 

Next we prove that the operator $\Phi$ maps $K_{r}$ to $K_{r}$.
Consequently, for $f\in K_{r}$ it follows that 
\begin{alignat*}{1}
0\le\Phi[f](x) & \le1-\int_{0}^{x}\frac{k_{f}(y)/2+\mu(y)}{g(y)}f(y)\,dy+\int_{0}^{\overline{x}}\int_{z}^{\overline{x}}\frac{\Gamma(z;\,y)k_{f}(y)}{g(y)}f(y)\,dy\,dz\\
 & \quad-\int_{x}^{\overline{x}}\int_{z}^{\overline{x}}\frac{\Gamma(z;\,y)k_{f}(y)}{g(y)}f(y)\,dy\,dz+\frac{1}{2}\int_{0}^{x}\int_{0}^{z}\frac{k_{a}(z-y,\,y)}{g(z-y)g(y)}f(z-y)f(y)\,dy\,dz\\
 & \le1+\int_{x}^{\overline{x}}\frac{1}{g(y)}k_{f}(y)f(y)\,dy+\int_{0}^{x}\frac{1}{g(y)}\left(\frac{1}{2}k_{f}(y)-\mu(y)\right)f(y)\,dy\\
 & \quad+\frac{1}{2}\left\Vert f\right\Vert _{u}^{2}\cdot\left\Vert k_{a}\right\Vert _{\infty}\cdot\left\Vert \frac{1}{g}\right\Vert _{1}^{2}\\
 & \le1+\left\Vert f\right\Vert _{u}\left\Vert \frac{1}{g}\right\Vert _{1}\left[\left\Vert k_{f}\right\Vert _{u}+\left\Vert \frac{1}{2}k_{f}-\mu\right\Vert _{u}+r\left\Vert k_{a}\right\Vert _{\infty}\left\Vert \frac{1}{g}\right\Vert _{1}\right]\\
 & \le1+r\left\Vert \frac{1}{g}\right\Vert _{1}\left[\left\Vert k_{f}\right\Vert _{u}+\left\Vert \frac{1}{2}k_{f}-\mu\right\Vert _{u}+\left\Vert k_{a}\right\Vert _{\infty}^{1/2}\right]\,.
\end{alignat*}
At this point choosing $\left\Vert \frac{1}{g}\right\Vert _{1}$ sufficiently
small yields $r>1$ in (\ref{eq:choice for r}), and thus we can guarantee
that 
\[
\left\Vert \Phi[f]\right\Vert \le r
\]
for all $f\in K_{r}$. Hence the operator $\Phi$ maps $K_{r}$ to
$K_{r}$. 

Next we will prove that the operator $\Phi$ is in fact a contraction
mapping, i.e., for all $f,\,h\in K_{r}$
\[
\left\Vert \Phi[f]-\Phi[h]\right\Vert _{u}\le c\left\Vert f-h\right\Vert _{u}
\]
for some $c\in[0,1)$. 
\begin{alignat*}{1}
\left|\Phi[f](x)-\Phi[h](x)\right| & \le\int_{0}^{x}\frac{k_{f}(y)/2-\mu(y)}{g(y)}\left|f(y)-h(y)\right|\,dy+\int_{x}^{\overline{x}}\frac{k_{f}(y)}{g(y)}\left|f(y)-h(y)\right|\,dy\\
 & +\frac{1}{2}\int_{0}^{x}\int_{0}^{z}\frac{k_{a}(z-y,\,y)}{g(z-y)g(y)}f(z-y)\left|f(y)-h(y)\right|\,dy\,dz\\
 & +\frac{1}{2}\int_{0}^{x}\int_{0}^{z}\frac{k_{a}(z-y,\,y)}{g(z-y)g(y)}h(y)\left|f(z-y)-h(z-y)\right|\,dy\,dz\\
 & +\int_{0}^{x}\frac{f(z)}{g(z)}\int_{0}^{\overline{x}}\frac{k_{a}(z,\,y)}{g(y)}\left|f(y)-h(y)\right|\,dy\,dz\\
 & +\int_{0}^{x}\frac{\left|f(z)-h(z)\right|}{g(z)}\int_{0}^{\overline{x}}\frac{k_{a}(z,\,y)}{g(y)}h(y)\,dy\,dz\,.
\end{alignat*}
Taking the supremum of both sides at this point yields
\[
\left\Vert \Phi[f]-\Phi[h]\right\Vert _{u}\le\left\Vert f-h\right\Vert _{u}\left\Vert \frac{1}{g}\right\Vert _{1}\left[\left\Vert k_{f}\right\Vert _{u}+\left\Vert \frac{1}{2}k_{f}-\mu\right\Vert _{u}+\frac{3}{2}\left\Vert k_{a}\right\Vert _{\infty}^{1/2}\right]\,.
\]
Once again by choosing $\left\Vert \frac{1}{g}\right\Vert _{1}$ sufficiently
small, we can guarantee that the constant 
\[
c=\left\Vert \frac{1}{g}\right\Vert _{1}\left[\left\Vert k_{f}\right\Vert _{u}+\left\Vert \frac{1}{2}k_{f}-\mu\right\Vert _{u}+\frac{3}{2}\left\Vert k_{a}\right\Vert _{\infty}^{1/2}\right]
\]
is less than $1$. This in turn implies that the operator $\Phi$
defined in (\ref{eq:definition of phi}) is also a contraction mapping.
Hence, the Contraction Mapping Theorem guarantees the existence of
a unique positive fixed point of $\Phi$ satisfying the bounds (\ref{eq:bounds for the fixed point}).
Therefore, the function $u_{*}=f_{*}/g$ is a stationary solution
of the flocculation equations (\ref{eq: agg and growth model}). Moreover,
from the assumption ($\mathbf{A}1$) and the continuity of the fixed
point $f_{*}$ it follows that $u_{*}$ is non-zero, non-negative
and continuous on $I$.
\end{proof}

\section{\label{sec:Numerical-results}Numerical approximation of non-trivial
stationary solutions}

Recall that at steady state the microbial flocculation equations reduce
to first order integro-differential equation 
\begin{alignat}{1}
\partial_{x}(gu_{*}) & =\frac{1}{2}\int_{0}^{x}k_{a}(x-y,\,y)u_{*}(x-y)u_{*}(y)\,dy-u_{*}(x)\int_{0}^{\overline{x}-x}k_{a}(x,\,y)u_{*}(y)\,dy\nonumber \\
 & \quad+\int_{x}^{\overline{x}}\Gamma(x;\,y)k_{f}(y)u_{*}(y)\,dy-\frac{1}{2}k_{f}(x)u_{*}(x)-\mu(x)u_{*}(x)\label{eq:steady-state IVP}
\end{alignat}
with a boundary condition

\[
g(0)u_{*}(0)=\int_{0}^{\overline{x}}q(x)u_{*}(x)dx\,.
\]
This in turn can be expressed in the form of a boundary value problem.
\begin{equation}
\frac{du_{*}}{dx}=F(x,\,u_{*}),\qquad u_{*}(0)=\int_{0}^{\overline{x}}q(x)u_{*}(x)dx/g(0)\,.\label{eq:reduced boundary VP}
\end{equation}
If we set 
\[
\int_{0}^{\overline{x}}q(x)u_{*}(x)dx=1
\]
and adjust the renewal rate to the obtained steady state as in Section
\ref{sec:Existence-of-a postive}, we get an initial value problem
\begin{equation}
\frac{du_{*}}{dx}=F(x,\,u_{*}),\qquad u_{*}(0)=1/g(0)\,.\label{eq:reduced IVP}
\end{equation}
Using Picard's Existence Theorem for IVPs, one can show that the IVP
(\ref{eq:reduced IVP}) has a unique solution for a suitably chosen
interval around the initial condition. Therefore, the above IVP is
well-posed. Note that we still need results of Section \ref{sec:Existence-of-a postive}
for the existence of a positive stationary solution since Picard's
Existence Theorem does not guarantee positivity of solutions of (\ref{eq:reduced IVP}). 

The solutions of the IVP (\ref{eq:reduced IVP}) can be approximated
using various numerical schemes such as finite difference, finite
element, and spectral methods \citep{nicmanis1998finiteelement,matveev2015afast,mirzaev2017anumerical}.
Towards this end, in this section, we develop a numerical scheme to
approximate the solutions of the IVP (\ref{eq:reduced IVP}). The
numerical scheme is based on spectral collocation method and thus
yields very accurate results even for small approximation dimensions. 

 Spectral collocation method, also known as pseudospectral methods,
is a subclass of Galerkin spectral methods \citep{fornberg1998apractical,trefethen2000spectral}.
Spectral methods, in general, have higher accuracy compared to Finite
difference and Finite Element methods and thus have widespread use
for the numerical simulation of partial differential equations. The
main idea behind the method is to express numerical solutions as a
finite expansion of some set of basis functions on a number of points
on the domain (i.e., collocation points). Convergence of the approximations
depend only on the smoothness of the solutions and thus the method
can attain high precision even with a few grid points.

We approximate the stationary solution $u_{*}(x)$ as linear combination
of $N$th degree polynomials $\left\{ \phi_{j}(x)\right\} _{j=0}^{N}$,
i.e.,
\[
\left[I_{N}u_{*}\right](x)=\sum_{j=0}^{N}u_{*}(x_{j})\phi_{j}(x)\,,
\]
for some collocation points $\left\{ x_{j}\right\} _{j=0}^{N}\subset I$.
Furthermore, we require that this approximation is exact at collocation
points, i.e.,
\[
\phi_{j}(x_{k})=\begin{cases}
1 & \text{if }j=k\\
0\, & \text{otherwise}
\end{cases}\,.
\]
The polynomials $\left\{ \phi_{j}(x)\right\} _{j=0}^{N}$ satisfying
the above requirements are called cardinal functions. Straightforward
way to compute such polynomials is using Lagrangian interpolation,
i.e., 
\begin{equation}
\phi_{j}(x)=\frac{\pi(x)}{\pi'(x_{j})(x-x_{j})}\,,\label{eq:cardinal functions}
\end{equation}
where $\pi(x)=\Pi_{j=0}^{N}(x-x_{j})$. Consequently, derivative can
be computed as 
\[
\frac{d}{dx}\left[I_{N}u_{*}\right](x)=\sum_{j=0}^{N}u_{*}(x_{j})\phi_{j}'(x)\,.
\]
This in turn implies that derivative at $x_{i}$ can be computed as
linear combination of $\left\{ u_{*}(x_{j})\right\} _{j=0}^{N}$.
Plugging in collocation points into above equation yields the differentiation
matrix $[D]_{i,\,j}=\phi_{j}'(x_{i})$. Entries of $D$ can be computed
explicitly as
\[
D_{ij}=\frac{\pi'(x_{i})}{\pi'(x_{j})(x_{i}-x_{j})}
\]
 for off-diagonal entries $i\ne j$ and
\[
D_{jj}=\sum_{\begin{array}{c}
k=0,\,k\ne j\end{array}}^{N}\frac{1}{x_{j}-x_{k}}\,
\]
for diagonal entries. Let $\vec{u}$ denote the vector 
\[
\begin{bmatrix}u_{*}(x_{0}) & \cdots & u_{*}(x_{N})\end{bmatrix}^{T}\,,
\]
then derivatives at collocation points are given by
\[
\begin{bmatrix}u_{*}'(x) & \cdots & u_{*}'(x_{N})\end{bmatrix}^{T}=D\vec{u}\,.
\]

When a smooth function is interpolated by polynomials in $N$ equally
space points, the approximations sometimes fail to converge as $N\to\infty$,
which is also known as \emph{Runge phenomenon.} Moreover, when using
uniform gird points, the elements of the differentiation matrix not
only fail to converge but they get worse and diverge as $N\to\infty$.
For the spectral collocation methods, it is a general consensus to
cluster the grid points roughly quadratically toward the endpoints
of the interval \citep{fornberg1998apractical}. Therefore, for collocation
points we use unevenly space grid points. We employ non-uniform shifted
Chebyshev-Gauss-Lobatto grid points,
\begin{equation}
x_{j}=\left(1-\cos\left(\frac{j\pi}{N}\right)\right)\frac{\overline{x}}{2}\qquad\forall i=0,1,\dots,N\,.\label{eq: Chebyshev grids}
\end{equation}
Points $\left\{ \cos\left(\frac{j\pi}{N}\right)\right\} _{j=0}^{N}$
are in fact extrema of $N$th Chebyshev polynomial on the interval
$-1\le x\le1$.

\begin{figure}
\centering{}\subfloat[\label{fig:Linear-case}]{\centering{}\includegraphics[width=0.3\textwidth]{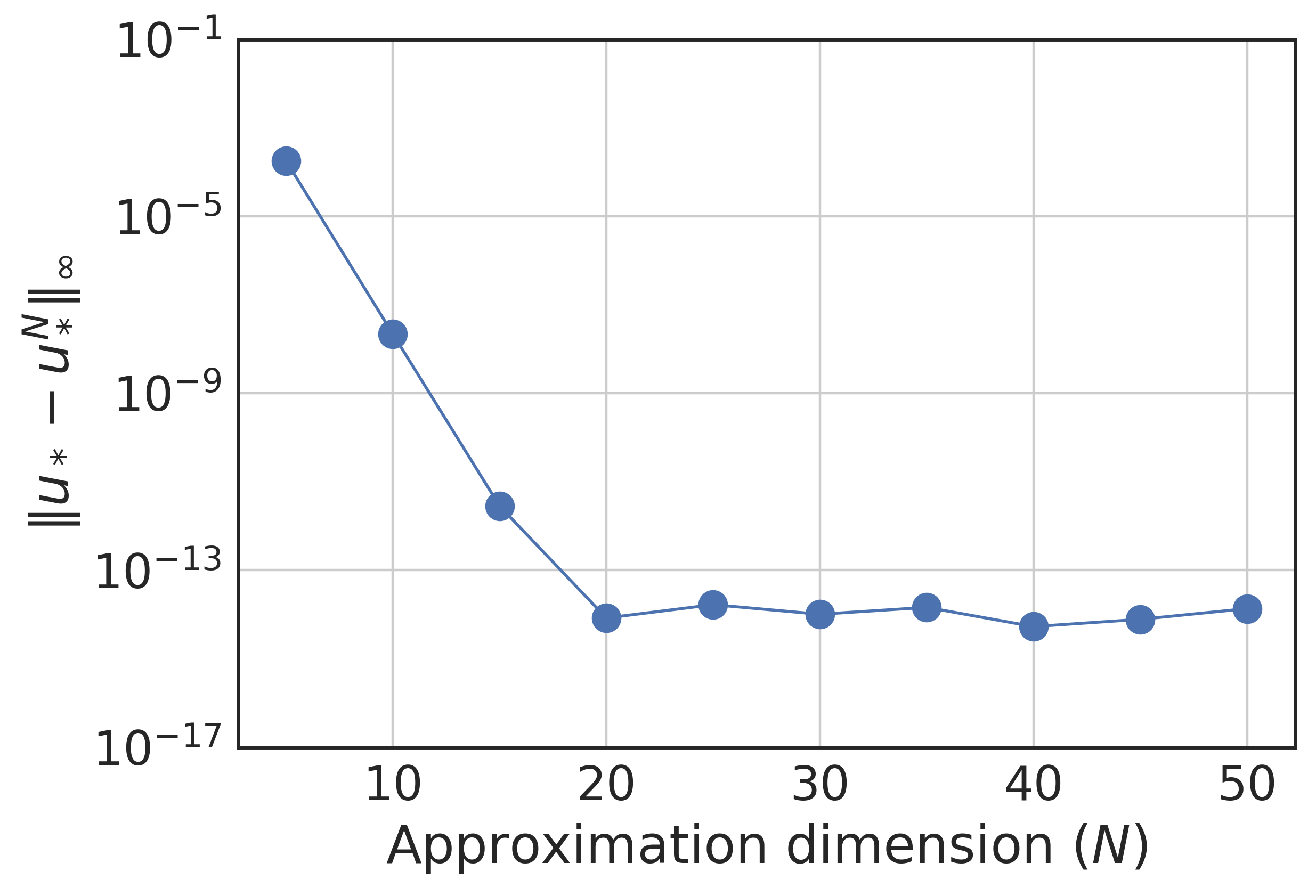}}~~~\subfloat[\label{fig:nonlinear case}]{\includegraphics[width=0.3\textwidth]{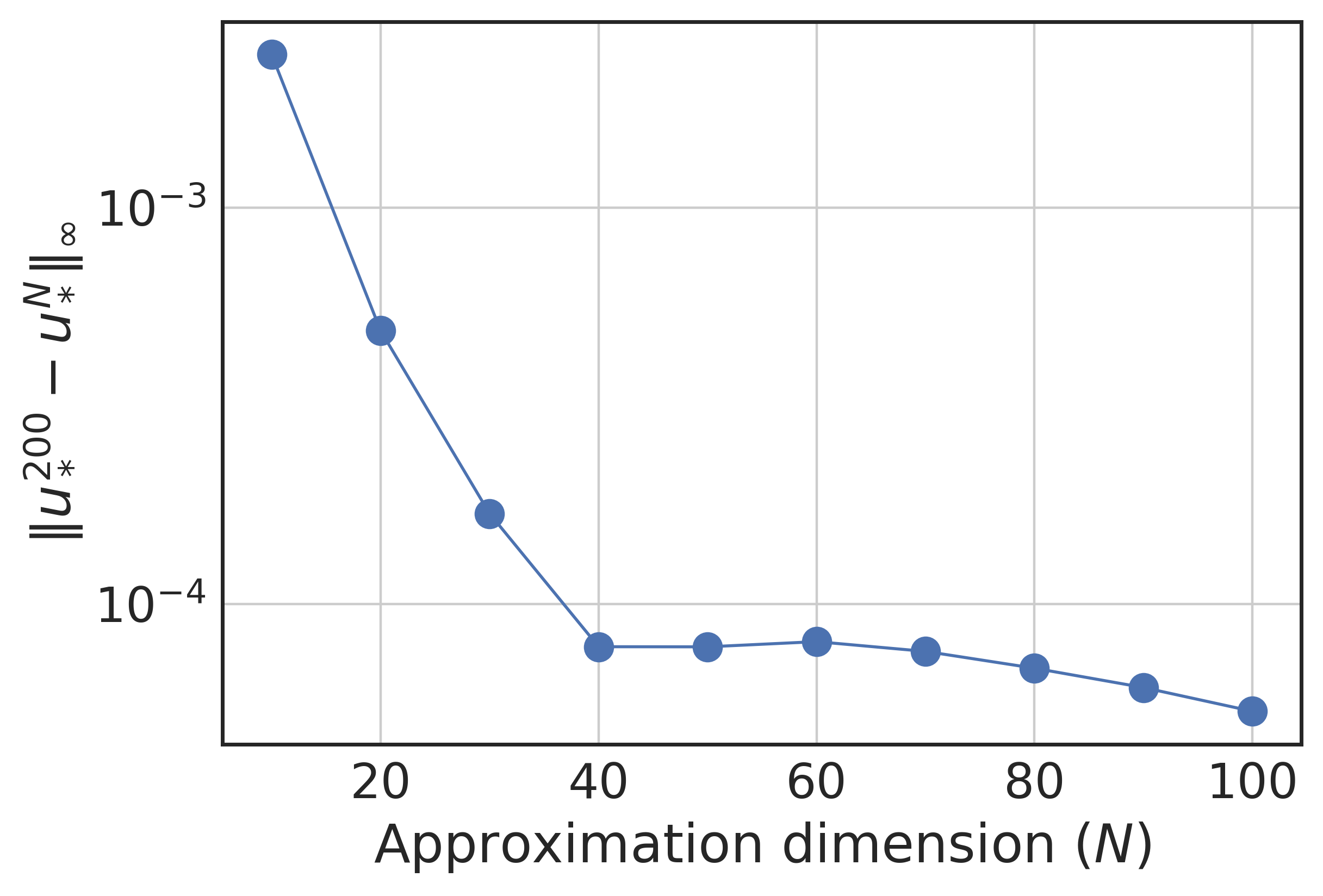}}~~~\subfloat[\label{fig:example solutions}]{\centering{}\includegraphics[width=0.3\textwidth]{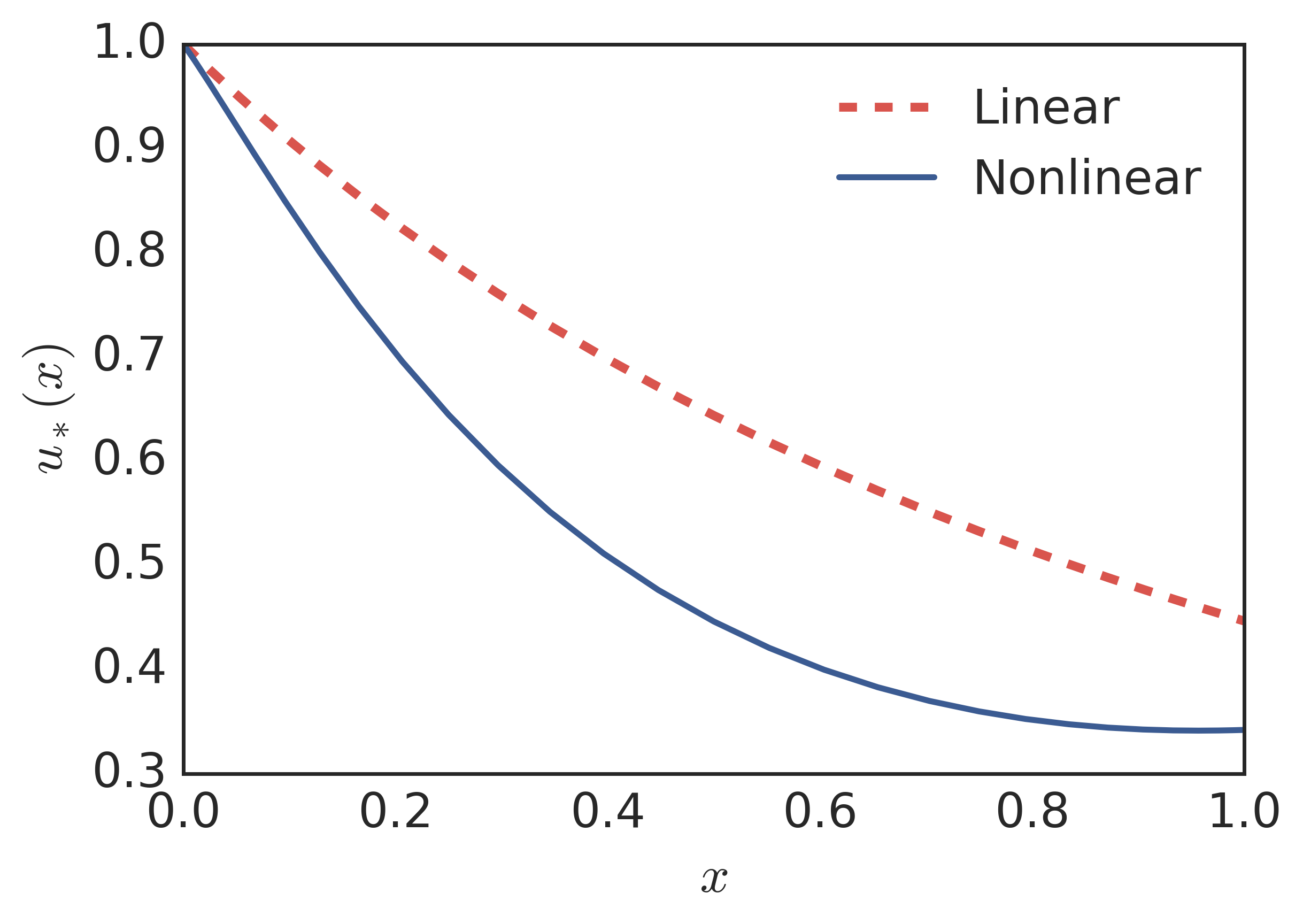}}\caption{\label{fig:Numerical-simulation-results-1}a) Error in approximation
of the steady states of linear Sinko-Streifer. Compared to existing
analytical solution. b) Error in approximation of the steady states
of nonlinear flocculation equations compared to approximate solution
at $N=200$. c) Example steady state solutions for linear Sinko-Streifer
equations and nonlinear microbial flocculation equations}
\end{figure}

For the numerical approximations of the steady state of the microbial
flocculation model (\ref{eq: agg and growth model})For integral approximations
we used Gaussian quadrature,
\[
\int_{0}^{\overline{x}}f(s)\,ds\approx\sum_{i=0}^{N}w_{i}f(x_{i})\,.
\]
We require Gaussian quadrature to be exact for chosen cardinal functions
$\left\{ \phi_{i}(x)\right\} _{i=0}^{N}$, which yields weights 
\[
w_{i}=\int_{0}^{\overline{x}}\phi_{i}(s)\,ds\,.
\]

Moreover, in the evaluation of the integrals in (\ref{eq:Aggregation})
one has to evaluate $\left[I_{N}u_{*}\right](x)$ at non-collocation
points, i.e.,
\begin{equation}
\left[I_{N}u_{*}\right](x_{k}-x_{i})=\sum_{j=0}^{N}u_{*}(x_{j})\phi_{j}(x_{k}-x_{i})\,.\label{eq:non collocation points}
\end{equation}
Once $\phi_{j}(x_{k}-x_{i})$ are calculated explicitly using (\ref{eq:cardinal functions}),
the approximations at non-collocation points (\ref{eq:non collocation points})
can be evaluated as linear combination of entries of $\vec{p}$. For
efficient implementation the elements $\phi_{j}(x_{k}-x_{i})$ can
be initialized as entries of three dimensional array, i.e.,
\[
[\Phi]_{k,i,j}=\begin{cases}
\phi_{j}(x_{k}-x_{i}) & \text{ if }k\ge i\\
0 & \text{otherwise}
\end{cases}\,.
\]
Consequently, approximation for (\ref{eq:non collocation points})
can be obtained as 
\[
\left[I_{N}u_{*}\right](x_{k}-x_{i})=\sum_{j=0}^{N}[\Phi]_{k,i,j}\vec{u}_{j}\,,
\]
which is simply dot product of three dimensional array $\Phi$ with
the vector $\vec{u}$.

To verify convergence of the proposed approximation scheme, we apply
first applied our numerical scheme to a special case of flocculation
equations for which an exact form of the stationary solution is available.
In particular, in the absence of aggregation and fragmentation the
flocculation equations simplify to linear Sinko-Streifer model \citep{sinko1967anew}.
The model describes the dynamics of single species populations and
takes into account the physiological characteristics of animals of
different sizes (and/or ages). 

Setting the right side of the equation (\ref{eq:Growth}) to zero
and integrating over the size on $(0,\,x)$ yields the exact stationary
solution
\begin{equation}
u_{*}(x)=\frac{1}{g(x)}\exp\left(-\int_{0}^{x}\frac{\mu(s)}{g(s)}\,ds\right)\,.\label{eq:exact stationary solution-1}
\end{equation}
In Figure \ref{fig:Linear-case}, we compared numerical approximations
of the stationary solution to exact solution given in (\ref{eq:exact stationary solution-1}).
The absolute error decreases exponentially fast for increasing approximation
dimension $N$. Numerical approximation attains machine precision
for $N\ge20$. As for the full nonlinear flocculation equations, to
the best of our knowledge, no analytical starionary solution is available.
Therefore, in Figure \ref{fig:nonlinear case}, we have plotted relative
error of approximations compared to a approximation at $N=200$. Relative
error decreases fast and achieves four digit precision for $N\ge40$.
Moreover, example steady state solutions for both linear and nonlinear
cases are illustrated in Figure \ref{fig:example solutions}. 

\section{Numerical Exploration of Steady States\label{sec:Numerical Exploration of Steady States}}

Having the approximation scheme in hand, in this section, we explore
the stationary solutions of the model for various biologically relevant
parameters and give valuable insights for the efficient removal of
suspended particles. For the purpose of illustration, the aggregation
kernel was chosen to describe a flow within laminar shear flow \citep{saffman1956onthe}
(i.e., \emph{orthokinetic} aggregation)
\[
k_{a}(x,\,y)=1.3\left(\frac{\epsilon}{\nu}\right)^{1/2}\left(x^{1/3}+y^{1/3}\right)^{3}\,,
\]
where $\epsilon$ represents the homogeneous turbulent energy dissipation
rate of the stirred tank and $\nu$ is the kinematic viscosity of
the suspending fluid. The quantity
\[
\dot{\gamma}:=\left(\frac{\epsilon}{\nu}\right)^{1/2}
\]
is often referred to as a ``volume average shear rate'' (hereby,
referred to as the ``shear rate'') of the stirring tank. 

We employ the fragmentation rate given by \citet{spicer1995shearinduced}
\[
k_{f}(x)=C_{f}x^{1/3}\,,
\]
where $C_{f}$ is the breakage rate coefficient for shear-induced
fragmentation. \citet{spicer1996coagulation} have experimentally
shown that there is a power law relation between the shear rate $\dot{\gamma}$
and the breakage rate $C_{f}$, 
\[
C_{f}=a\dot{\gamma}^{b}
\]
where $a$ and $b$ fitting parameters specific to a flow type. For
our purposes we use the parameters for laminar shear flow reported
by \citet{flesch1999laminar}, 
\[
a=7\times10^{-4},\qquad b=1.6\,.
\]

\begin{table}
\begin{centering}
\begin{tabular}{cccc}
\hline 
Parameter & Symbol & Value & Source\tabularnewline
\hline 
\hline 
Kinematic viscosity of water at $20^{\circ}$C & $\nu$ & $10^{-6}\,m^{2}/s$ & \citep{jewett2008physics}\tabularnewline
Shear rate & $\dot{\gamma}$ & $0-100\,s^{-1}$ & \citep{piani2014rheology}\tabularnewline
Fitting parameter & $a$ & $7\times10^{-4}$ & \citep{flesch1999laminar}\tabularnewline
Fitting parameter & $b$ & $1.6$ & \citep{flesch1999laminar}\tabularnewline
Removal rate & $C_{\mu}$ & $1/\dot{\gamma}$ & Assumed\tabularnewline
Growth rate & $C_{g}$ & $0-10$ & Assumed\tabularnewline
Renewal rate (surface erosion) & $C_{q}$ & $\left(\int_{0}^{1}(y+1)u_{*}(y)\,dy\right)^{-1}$ & This paper (\ref{eq: explicit value for C_q})\tabularnewline
\hline 
\end{tabular}
\par\end{centering}
\caption{\label{tab:Parameter-values-used}Model parameters and their values
used in simulations}
\end{table}
For a post-fragmentation density function we chose the well-known
Beta distribution\footnote{Although normal and log-normal distributions are mostly used in the
literature, Kightley et al. \citep{kightley2018fragmentation} have
provided evidence that the Beta density function describes the fragmentation
of small bacterial flocs. } with $\alpha=\beta=2$,
\[
\Gamma(x,\,y)=\mathbbm{1}_{[0,\,y]}(x)\frac{6x(y-x)}{y^{3}}\,,
\]
where $\mathbbm{1}_{I}$ is the indicator function on the interval
$I=[0,\,1]$. Removal rate is assumed to be linearly proportional
to the volume of the floc, 
\[
\mu(x)=C_{\mu}x\,.
\]
Since flocs sediment slower under large shear rates, the removal rate
should be inversely proportional to the shear rate of the stirring
tank. Therefore, for the remaining of the paper we set the removal
rate to 
\[
C_{\mu}=\exp\left(-\dot{\gamma}\right)\,.
\]
Renewal (or surface erosion rate) is assumed to be proportional to
the surface area of a floc, i.e., 
\[
q(x)=C_{q}x^{2/3}\,.
\]
where $C_{q}$ some positive real number. Finally, we chose growth
rate arbitrarily to fulfill positivity condition $\left(\text{\textbf{A}}1\right)$,
\[
g(x)=C_{g}(x+1)\,.
\]
Note that, once a stationary solution is found, the constant $C_{q}$
needs to be set to 
\begin{equation}
C_{q}:=\left(\int_{0}^{1}(y+1)p_{*}(y)\,dy\right)^{-1}\,.\label{eq: explicit value for C_q}
\end{equation}
The remaining parameters $C_{g}$ and $\dot{\gamma}$ can be set to
an arbitrary positive real number. 

\begin{figure}
\centering{}\subfloat[\label{fig:increasing growth rate-1}]{\includegraphics[width=0.4\textwidth]{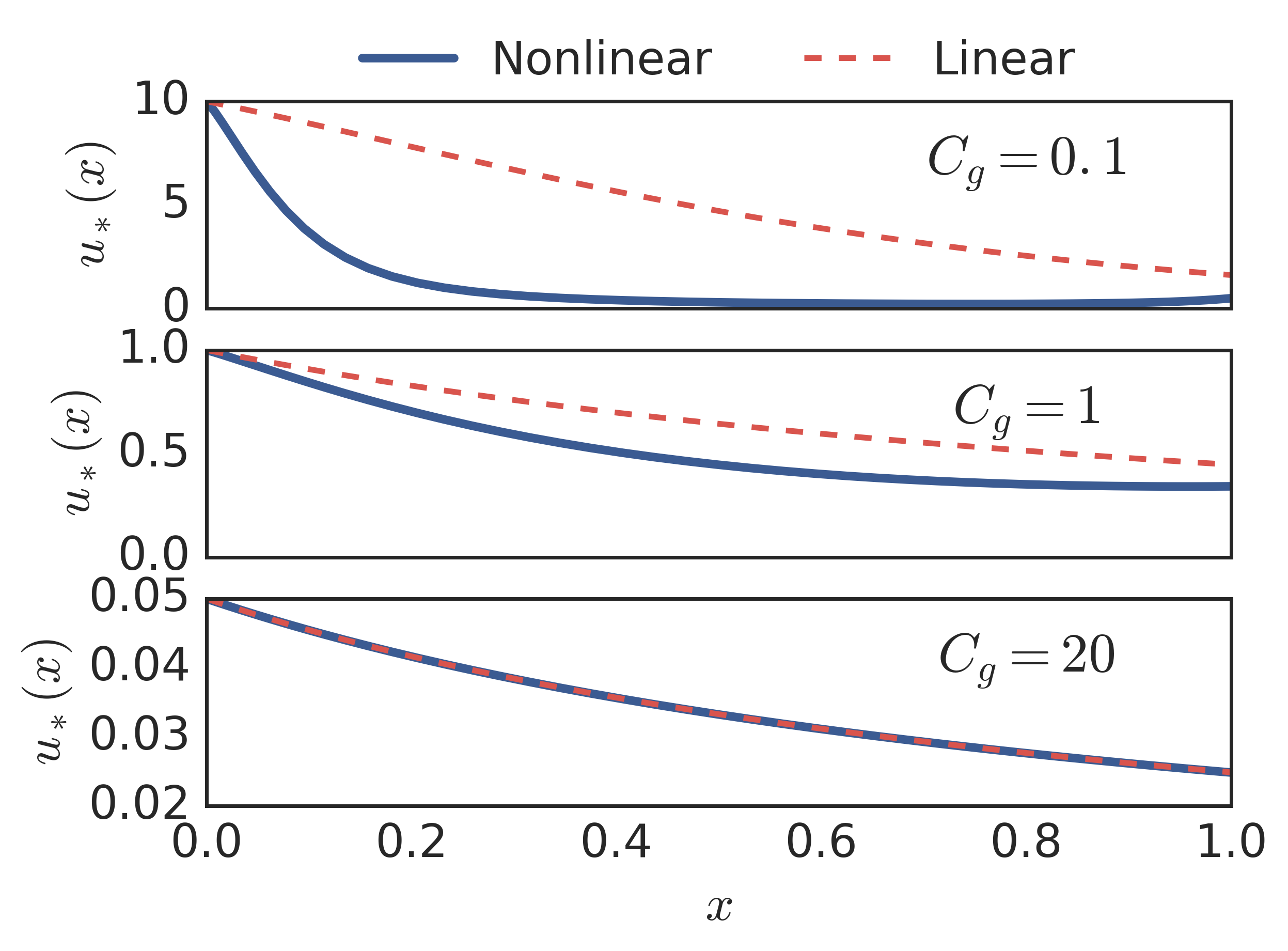}}~~~\subfloat[\label{fig:decreasing shear rate-1}]{\includegraphics[width=0.4\textwidth]{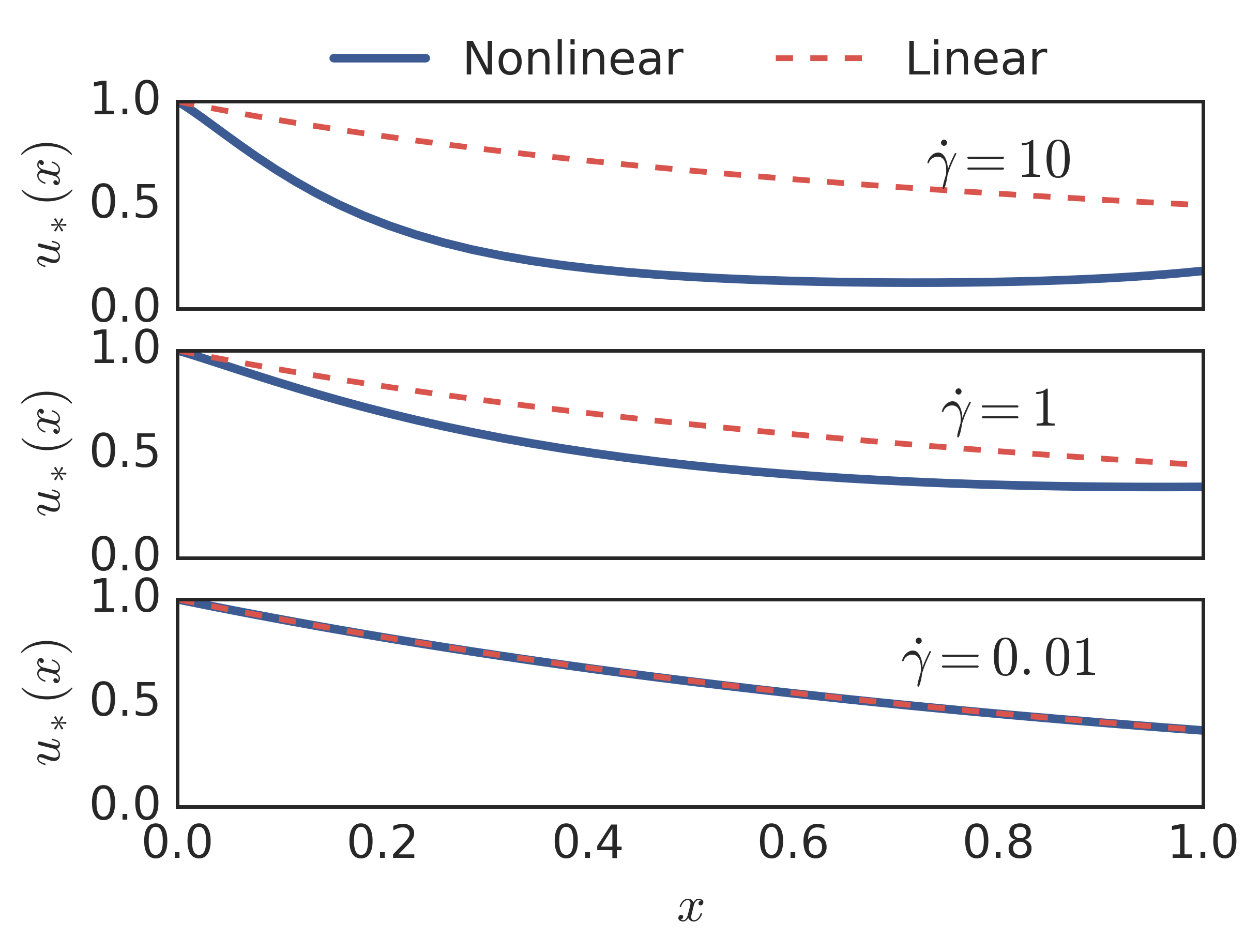}}\caption{Results of some numerical simulations. Dashed red and solid blue lines
correspond to the stationary of the linear Sinko-Streifer and the
nonlinear microbial flocculation equations, respectively. a) Steady
state solutions for increasing growth rate b) Steady state solutions
for decreasing shear rate }
\end{figure}

Stationary solutions of the linear and nonlinear case for several
different values of $C_{g}$ are depicted in Figure \ref{fig:increasing growth rate-1}.
It is interesting to note that for increasing values of $C_{g}$ the
growth dominates and thus stationary solution of the nonlinear case
approaches the stationary solutions of the linear case. Analogously,
as one could expect, results in Figure \ref{fig:decreasing shear rate-1}
indicate that as $\dot{\gamma}\to0$ the stationary solutions of the
microbial flocculation model converge to that of essentially linear
Sinko-Streifer model. 

\begin{figure}
\centering{}\subfloat[\label{fig:increasing shear rate-1}]{\includegraphics[width=0.3\textwidth]{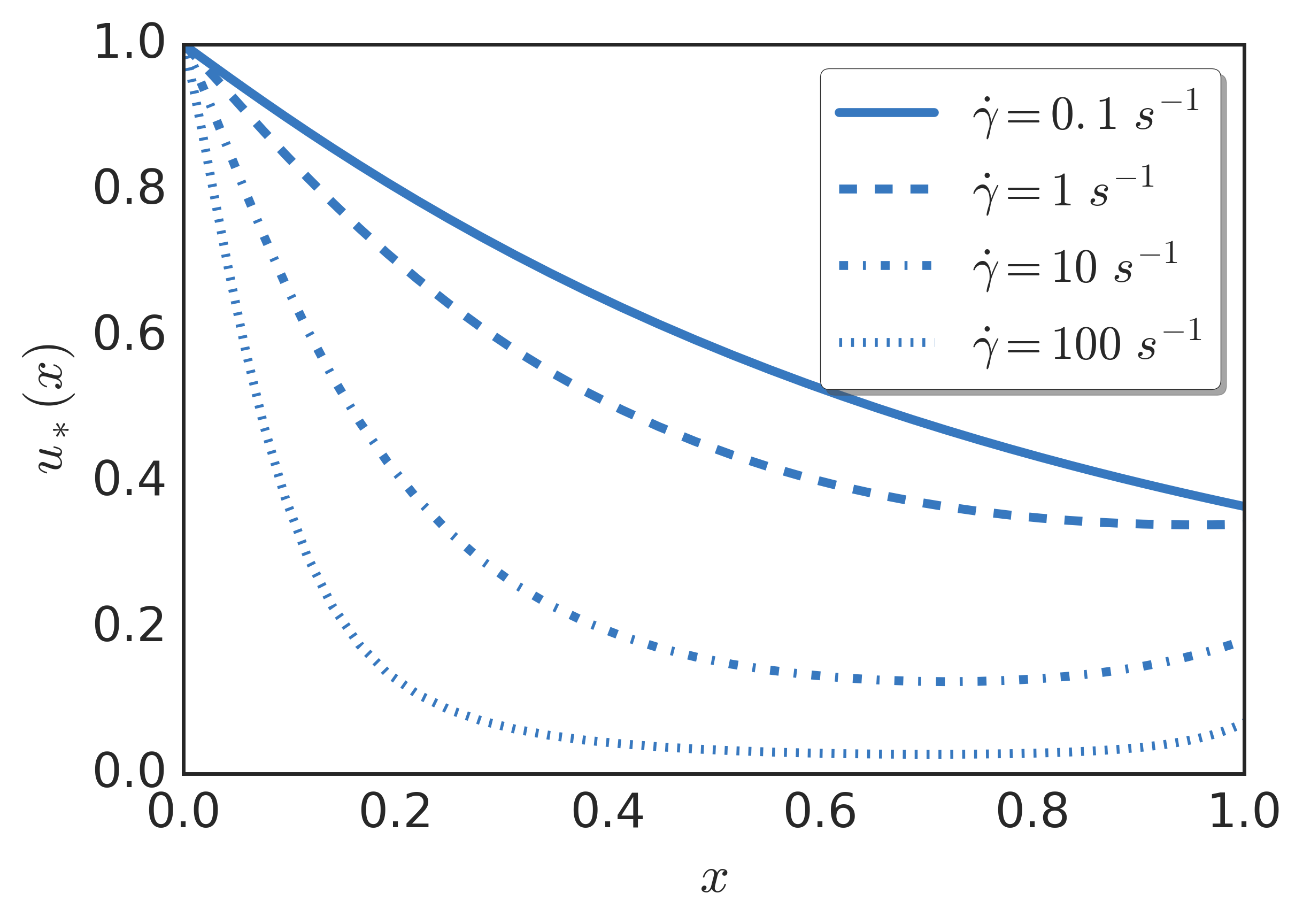}}~~~\subfloat[\label{fig:Growth-vs-shear rate,-1}]{\includegraphics[width=0.3\textwidth]{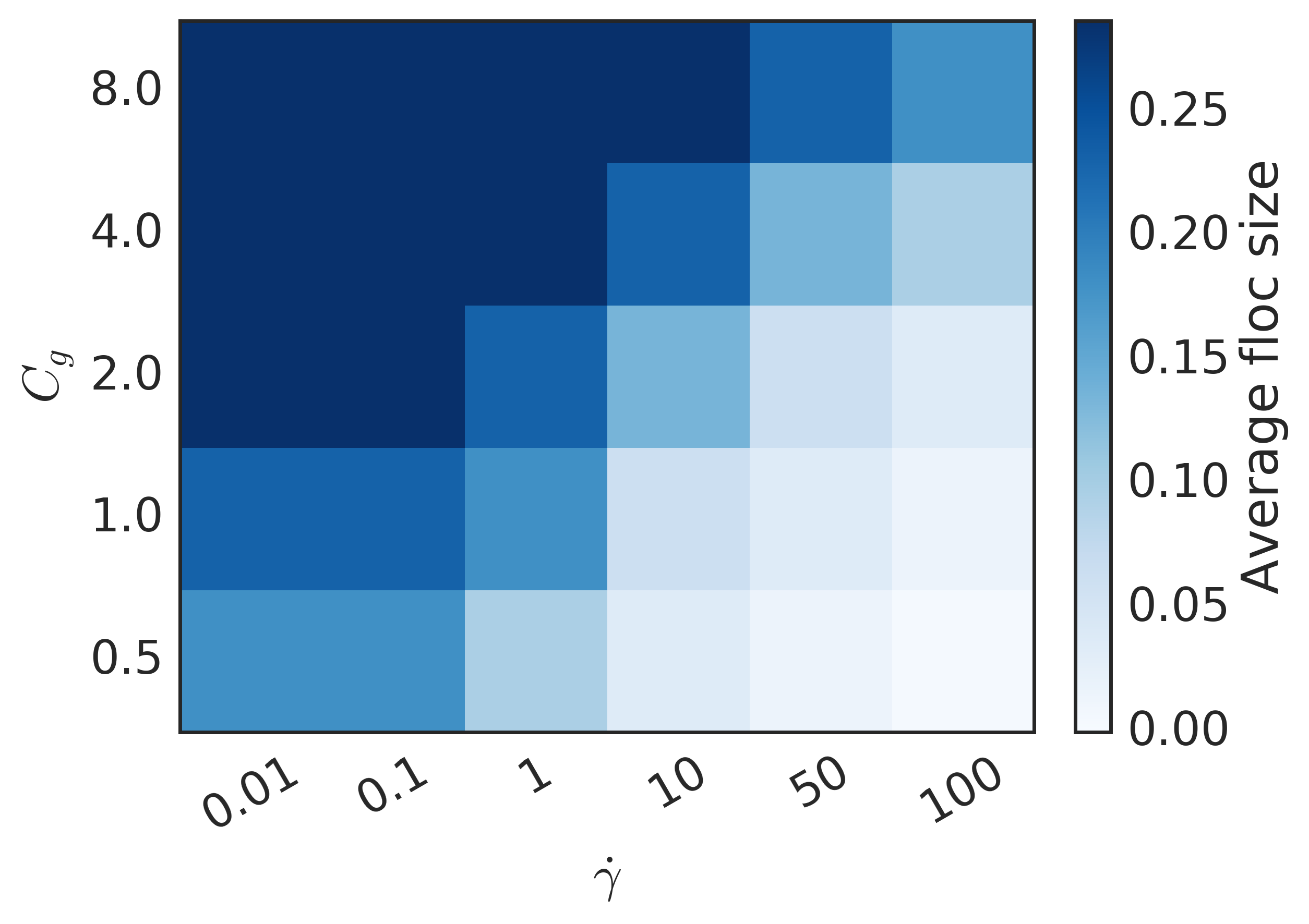}}\caption{\label{fig:Parameter-range-exploration:-1} Effect of the shear rate
on the average floc size and the renewal rate. a) Increasing the shear
rate results in stationary distributions with smaller average floc
size. b) For each given growth rate of a microbial floc, increasing
the shear rate decreases the average floc size. }
\end{figure}

Thorough control of floc formation is crucial for proper operation
of bioreactors used in fermentation industry and waste water treatment.
For efficient removal of suspended particles, it is usually desirable
to have larger and denser flocs that settle faster under gravitational
forces. Therefore, in Figures \ref{fig:increasing shear rate-1} and
\ref{fig:Growth-vs-shear rate,-1}, we investigated the effects of
the shear rate on the average floc size. One can observe in Figure
\ref{fig:increasing shear rate-1} that increasing the shear rate
results in increased fragmentation of the flocs and thus drives the
stationary distribution into the smaller size range, which is also
consistent with the results of \citep{flesch1999laminar}. Moreover,
results in Figure \ref{fig:Growth-vs-shear rate,-1} indicate that
for each given growth rate of a microbial floc one can adjust the
shear rate to yield an optimal average floc size.

As it is stated in Theorem \ref{thm:positive steady state existence},
for the existence of a positive stationary solution the renewal rate
has to be modified according to equation (\ref{eq: assumption on renewal}).
Consequently, in Figure \ref{fig:renewal rate surface}, for different
growth and shear rates we computed the renewal rate $C_{q}$ based
on the equation (\ref{eq: explicit value for C_q}). Computed renewal
rates lie on a smooth three-dimensional surface. Moreover, the results
indicate that the renewal rate is directly proportional to both growth
and shear rates. 

The numerical algorithm described in this chapter can be easily modified
to find steady states of the boundary value problem in (\ref{eq:reduced boundary VP}).
In this general case, the results of Section \ref{sec:Existence-of-a postive}
do not guarantee the existence of a positive steady state. However,
Figure \ref{fig:steady state for star mark} depicts that positive
steady states exist for the points not lying on the surface illustrated
in Figure \ref{fig:renewal rate surface}. This in turn suggests well-posedness
of the boundary value problem given in (\ref{eq:reduced boundary VP})
for sufficiently smooth model rates. Therefore, as the future plan,
we wish to further investigate well-posedness of this boundary value
problem.
\begin{center}
\begin{figure}
\centering{}\subfloat[\label{fig:renewal rate surface}]{\begin{centering}
\includegraphics[width=0.4\textwidth]{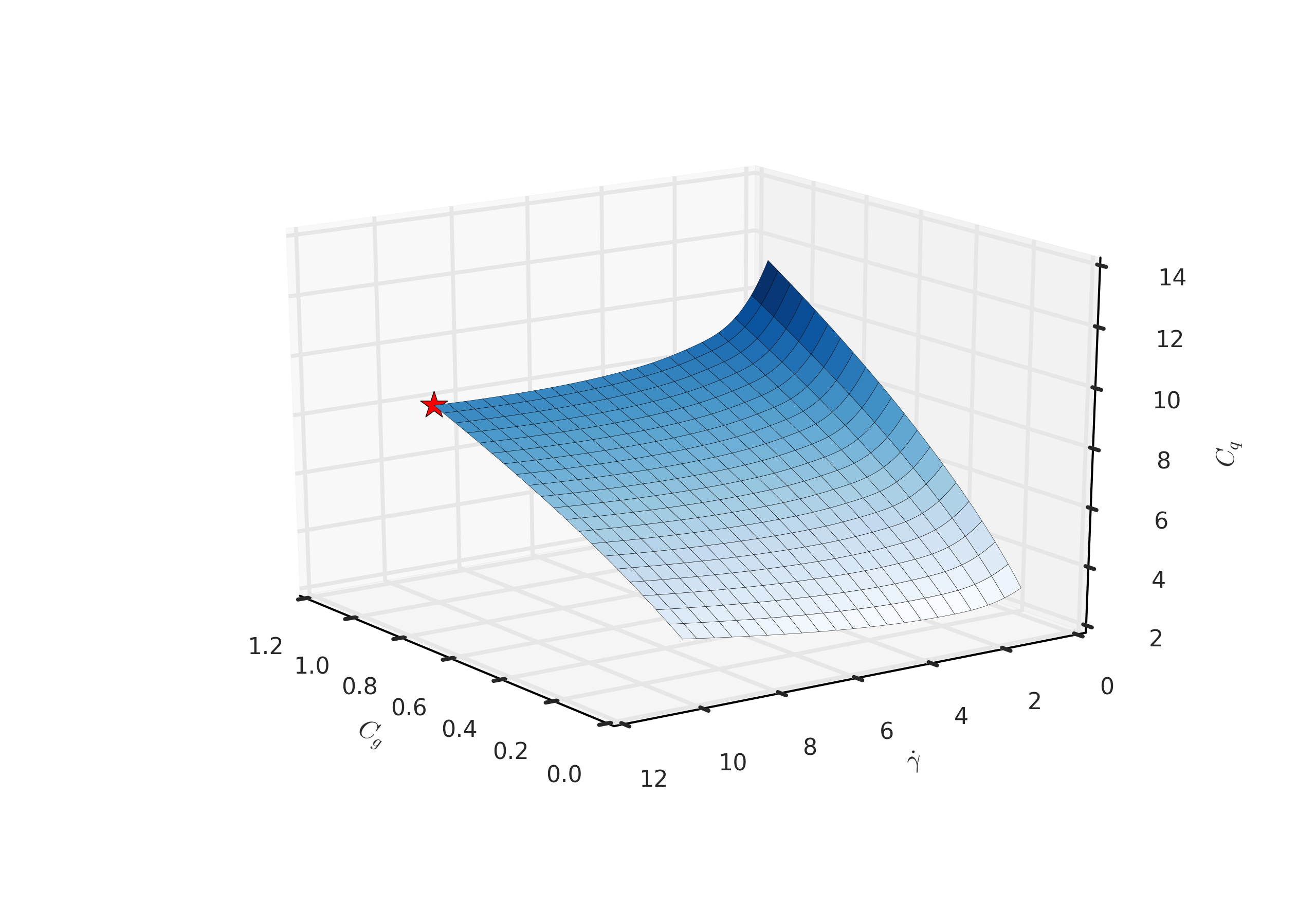}
\par\end{centering}
}~~~~\subfloat[\label{fig:steady state for star mark}]{\begin{centering}
\includegraphics[width=0.4\textwidth]{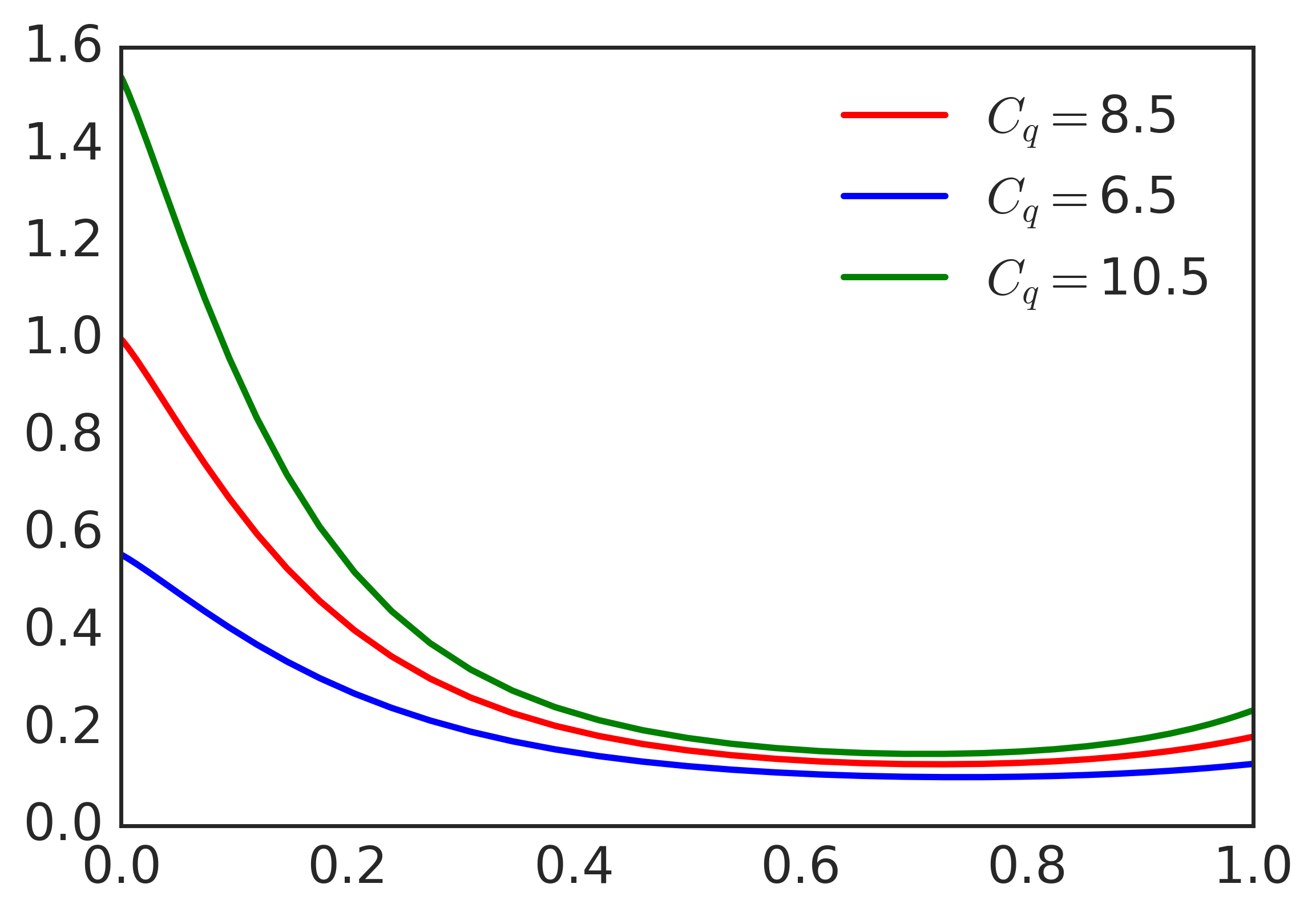}
\par\end{centering}
}\caption{Effect of growth and shear rates on the renewal rate, $C_{q}$. a)
Renewal rates form a smooth surface. Marked red star corresponds to
the point $C_{g}=1$, $\dot{\gamma}=1$ and $C_{q}=8.5$ b) Steady
states for marked red star and some points below and above the marked
point.}
\end{figure}
\par\end{center}

\section{\label{sec:Concluding-remarks}Concluding remarks}

Our primary motivation in this paper is to investigate the ultimate
behavior of solutions of a generalized size-structured flocculation
model. The model accounts for a broad range of biological phenomena
(necessary for survival of a community of microorganism in a suspension)
including growth, aggregation, fragmentation, removal due to predation,
and gravitational sedimentation. Moreover, the number of cells that
erode from a floc and enter the single cell population is modeled
with McKendrick-von Foerster type renewal boundary equation. Although
it has been shown that the model has a unique positive solution, to
the best of our knowledge, the large time behavior of those solutions
has not been studied. Therefore, in this paper we showed that under
relatively weak restrictions the flocculation model possesses a non-trivial
stationary solution (in addition to the trivial stationary solution). 

We developed a numerical scheme based on spectral collocation method
to approximate these nontrivial stationary solutions. Our numerical
exploration of the stationary solutions of the microbial flocculation
model indicate that for sufficiently large growth rates the stationary
solutions converge to stationary solutions of the linear Sinko-Streifer
model (\ref{eq:Growth}). Therefore, as future research, we plan to
study this behavior analytically. 

Furthermore, in Section \ref{sec:Numerical Exploration of Steady States},
we numerically investigated physically relevant parameter ranges.
In particular, we studied the effects of the shear rate on the average
floc size. Our results (consistent with the result in the literature
\citep{flesch1999laminar,Spicer1998,Spicer1997}) indicate that increasing
the shear rate of the stirring tank results in increased fragmentation,
and thus decreasing the steady-state average floc size.

\section*{Acknowledgments}

Funding for this research was supported in part by grants NSF-DMS
1225878 and NIH-NIGMS 2R01GM069438-06A2. 

\bibliographystyle{apalike}
\bibliography{C:/Users/marakanda/Dropbox/misc/texmf/bibtex/bib/my_zotero_CU}

\end{document}